\definecolor{webgreen}{rgb}{0,.5,0}
\definecolor{webbrown}{rgb}{.6,0,0}
\begin{document}

\theoremstyle{plain}
\newtheorem{theorem}{Theorem}
\newtheorem{corollary}[theorem]{Corollary}
\newtheorem{lemma}[theorem]{Lemma}
\newtheorem{proposition}[theorem]{Proposition}

\theoremstyle{definition}
\newtheorem{definition}[theorem]{Definition}
\newtheorem{example}[theorem]{Example}
\newtheorem{conjecture}[theorem]{Conjecture}
\newtheorem{notation}[theorem]{Notation}

\theoremstyle{remark}
\newtheorem{remark}[theorem]{Remark}

\title{A FAMILY OF CUBIC DIOPHANTINE EQUATIONS AND 4-CHAINS}


\author{Karen Ge}
\address{}
\curraddr{}
\email{kge@bu.edu}
\thanks{}

\subjclass[2010]{Primary 11D25; Secondary 11B83.}

\keywords{}

\date{}

\dedicatory{}

\begin{abstract}
In a simple integer chain, if $u_{i-1}$, $u_i$, and $u_{i+1}$ are three consecutive terms of the chain, and the pair $(u_{i-1}, u_i)$ has a certain property, then the next pair $(u_i, u_{i+1})$ also has the same property. We extend the idea of a simple chain to an $n$-chain in which $n$ is a positive integer and if a pair $(u_{i-1}, u_{i})$ has a certain property, then the $n$th next pair $(u_{\pm n+i-1}, u_{\pm n+i})$ also has the same property. In this case, we call $(u_{i-1}, u_{i}, u_{i+1})$ and 
$(u_{\pm n+i-1}, u_{\pm n+i}, u_{\pm n+i+1})$ {\it matching triples}. We use $4$-chains to study a family  of cubic Diophantine equations including $x^3 + y^3 + x +y +1 = xyz$ and three others. We show that a pair of integers $(x, y)$ satisfies one of those four equations if and only if $x$ and $y$ are consecutive terms of a $4$-chain. Our main result is that if triple $(u, t, vw)$ is an ordered list of three consecutive terms of one $4$-chain, where $|t|$ is a prime, $t \nmid (u-v)$, and triple $(v, t, uw)$ is that of a second $4$-chain and it matches the first triple, then triple $(-w, t, -uv)$ is that of a third $4$-chain and it matches the other two triples.
\end{abstract}

\maketitle

\section{Introduction}

Many Diophantine equations have been solved in the past 350 years. A comprehensive, though not up-to-date, list of interesting results can be found in Mordell's classical book~\cite{Mordell69}. However, finding general methods of solving nonlinear Diophantine equations is still as challenging as ever. In~\cite{Mills53}, Mills uses integer chains to study the pair of simultaneous quadratic Diophantine relations $x \, | \, y^2 +1$ and $y \, | \, x^2 +1$. By a chain we mean a sequence of integers $(u_n)_{n \in \mathbb{Z}}$ such that if $u_{i-1}$, $u_i$, and $u_{i+1}$ are three consecutive terms of the chain, and the pair $(u_{i-1}, u_i)$ has a certain property, then the next pair $(u_i, u_{i+1})$ also has the same property. Here we call such a sequence a $1$-chain since shifting the indices by $1$ gives another pair of integers that satisfy the same property. Mohanty~\cite{Mohanty77} studies the pair of simultaneous cubic Diophantine relations $x \, | \, y^3 +1$ and $y \, | \, x^3+1$ similarly using $1$-chains. Mills~\cite{Mills53} shows that positive integers $x$ and $y$ satisfy
\[
x \, | \, y^2 +1 \text{ and } y \, | \, x^2 +1
\]
if and only if $x$ and $y$ are consecutive terms of the sequence 1, 1, 2, 5, 13, 34, $\ldots$, obtained from Fibonacci sequence  by striking out alternate terms. In~\cite{Mohanty77}, Mohanty shows that positive integers $x$ and $y$ satisfy
\begin{equation*}
x \, | \, y^3 +1 \text{ and } y \, | \, x^3 +1 
\end{equation*}
if and only if $x$ and $y$ are consecutive terms of an infinite $1$-chain. Dofs~\cite{Dofs93} uses a $1\pm$chain to represent a $1$-chain that has both positive and negative terms and extends Mohanty~\cite{Mohanty77}'s results to integers.

\medskip

We generalize the idea of a simple integer chain and introduce $n$-chains, in which $n$ is a positive integer and if a pair $(u_{i-1}, u_{i})$ has a certain property, then the pairs $(u_{n+i-1}, u_{ n+i})$ and $(u_{-n+i-1}, u_{ -n+i})$  also have the same property. In this case, $(u_{i-1}, u_{i}, u_{i+1})$, 
$(u_{n+i-1}, u_{n+i}, u_{n+i+1})$, and  $(u_{-n+i-1}, u_{-n+i}, u_{-n+i+1})$ are called {\it matching triples}. We use $4$-chains to study cubic Diophantine relations of the type $y \, | \, x^3 + x +1$ and $y \, | \, x^3 +x^2 +1$. We are interested in $x^3 +x +1$ and $x^3+x^2+1$ because unlike $x^3 + 1$, they are irreducible polynomials. The solutions to those Diophantine relations and the relationships among their various solutions are  more elusive.

\medskip

We first introduce  four cubic Diophantine equations and show that each equation is equivalent to a system of simultaneous  cubic Diophantine relations. We define $4$-chains and matching triples formally and show that integers $x$ and $y$ satisfy one of those relations if and only if they are consecutive terms of an infinite $4$-chain. Then we consider $4$-chains that share common elements and  show that there exist two non-identical $4$-chains that have the same non-trivial least element. Finally, we show that if $u$, $t$, $vw$ are three consecutive terms of one $4$-chain, $v$, $t$, $uw$ are three consecutive terms of a second $4$-chain, where $(u, t, vw)$ and $(v, t, uw)$ are matching triples, $|t|$ is a prime, and $t \nmid (u-v)$, then $-w$, $t$, $-uv$ are three consecutive terms of a third $4$-chain. Furthermore, $(u, t, vw)$, $(v, t, uw)$, and $(-w, t, -uv)$ are  matching triples.

\section{Four Systems of Simultaneous Cubic Diophantine Relations}

Now we introduce the four pairs of simultaneous cubic Diophantine relations that we will study in this paper and prove that each pair is equivalent to a cubic Diophantine equation.

\begin{theorem}\label{eq}
The cubic Diophantine equation $x^3 + y^3 + x + y +1 = xyz$ has a solution  if and only if $(x, y)$ satisfies the system $S_{1,1}$:
\[ 
\begin{cases}
\; x \, | \, y^3 + y +1, \\
\; y \, | \, x^3 + x +1.
\end{cases} \tag{$S_{1,1}$}
\]
\end{theorem}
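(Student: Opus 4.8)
The plan is to prove Theorem~\ref{eq} by establishing the two implications separately, using elementary algebraic manipulation of the cubic equation. The key algebraic observation is that the symmetric expression $x^3 + y^3 + x + y + 1$ can be rearranged to isolate divisibility information about each variable.

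\begin{proof}[Proof proposal]
First I would prove the forward direction. Suppose the equation $x^3 + y^3 + x + y + 1 = xyz$ has an integer solution $(x, y, z)$. Reading the equation modulo $x$, every term on the left that carries a factor of $x$ vanishes, and the right-hand side $xyz \equiv 0 \pmod{x}$. This leaves $y^3 + y + 1 \equiv 0 \pmod{x}$, i.e.\ $x \mid y^3 + y + 1$. By the complete symmetry of the left-hand side in $x$ and $y$ (and the symmetry of $xyz$), reading the same equation modulo $y$ yields $y \mid x^3 + x + 1$. Hence $(x, y)$ satisfies the system $(S_{1,1})$.

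Next I would prove the converse, which is the step I expect to be the main obstacle, since it requires producing the auxiliary integer $z$ from the two divisibility hypotheses. Assume $x \mid y^3 + y + 1$ and $y \mid x^3 + x + 1$. I want to show that $xy \mid x^3 + y^3 + x + y + 1$, for then setting $z = (x^3 + y^3 + x + y + 1)/(xy)$ gives an integer solution. The natural approach is to show that $x \mid x^3 + y^3 + x + y + 1$ and $y \mid x^3 + y^3 + x + y + 1$ and then combine these. Modulo $x$, the left-hand side reduces to $y^3 + y + 1$, which is divisible by $x$ by hypothesis; symmetrically, modulo $y$ it reduces to $x^3 + x + 1$, divisible by $y$ by hypothesis. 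Thus both $x$ and $y$ divide $N := x^3 + y^3 + x + y + 1$.

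The remaining subtlety is that $x \mid N$ and $y \mid N$ do not by themselves give $xy \mid N$ unless $\gcd(x, y) = 1$, so I would first verify coprimality. If a prime $p$ divided both $x$ and $y$, then from $x \mid y^3 + y + 1$ we would get $p \mid y^3 + y + 1$; but $p \mid y$ forces $p \mid y^3 + y$, whence $p \mid 1$, a contradiction. Therefore $\gcd(x, y) = 1$, and combining $x \mid N$, $y \mid N$ with coprimality yields $xy \mid N$. Setting $z = N/(xy)$ then exhibits the desired solution to $x^3 + y^3 + x + y + 1 = xyz$, completing the equivalence.
\end{proof}
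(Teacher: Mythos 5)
Your proposal is correct and follows essentially the same route as the paper: read the equation modulo $x$ and modulo $y$ for the forward direction, and for the converse show $x$ and $y$ each divide $x^3+y^3+x+y+1$ and combine via coprimality to get divisibility by $xy$. The only difference is that you explicitly justify $\gcd(x,y)=1$ (a worthwhile detail the paper merely asserts in its proof and notes separately afterward).
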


\begin{proof}
If $(x, y, z)$ is a solution to $x^3 + y^3 + x + y +1 = xyz$, then clearly  $x \, | \, y^3 + y +1$ and $y \, | \, x^3 + x +1$. On the other hand, if  $x \, | \, y^3 + y +1$ and $y \, | \, x^3 + x +1$, then $x^3 + y^3 + x + y +1$ is divisible by $x$, divisible by $y$, and $\gcd(|x|, |y|) = 1$. So $x^3 + y^3 + x + y +1$ is divisible by $xy$. Thus there is an integer $z$ such that $x^3 + y^3 + x + y +1 = xyz$.
\end{proof}

Similarly, we can show the following.

\begin{enumerate}

\item The cubic Diophantine equation $x^3 + y^3 + x + y^2 +1 = xyz$ has a solution if and only if  $(x, y)$ satisfies the system $S_{2,1}$: 
\[
\begin{cases}
\; x \, | \, y^3 + y^2 +1,\\
\; y \, | \, x^3 + x +1.
\end{cases} \tag{$S_{2,1}$}
\]

\item The cubic Diophantine equation $x^3 + y^3 + x^2 + y^2 +1 = xyz$ has a solution  if and only if  $(x, y)$ satisfies the system $S_{2, 2}$: 
\[ 
\begin{cases}
\; x \, | \, y^3 + y^2 +1,\\
\; y \, | \, x^3 + x^2 +1.
\end{cases} \tag{$S_{2, 2}$}
\]

\item The cubic Diophantine equation $x^3 + y^3 + x^2 + y +1 = xyz$ has a solution  if and only if $(x, y)$ satisfies the system $S_{1,2}$: 
\[ 
\begin{cases}
\; x \, | \, y^3 + y +1,\\
\; y \, | \, x^3 + x^2 +1.
\end{cases} \tag{$S_{1,2}$}
\]
\end{enumerate}

Clearly, if $(x, y)$ is a solution to $S_{2,1}$, then $(y, x)$ is a solution to $S_{1,2}$. We give them different names  because order matters in the formation  of $4$-chains we will study.  Note that if $(x, y)$ is a solution to any of those four systems of simultaneous Diophantine relations, then $\gcd(|x|, |y|) = 1$. Also note that since neither $x^3 + x + 1 = 0$ nor $x^3 + x^2 + 1 = 0$ has integer solutions, 0 is never a part of a solution to any of those four systems. 

\medskip

Next we show that the solutions of those four systems of cubic Diophantine relations are related.

\begin{theorem}\label{s11}
Let integer pair $(x_0, y_0)$ be a solution to $S_{1,1}$. Let $y_{-1}$ and $x_1$ be real numbers satisfying
\[
y_{-1} y_0= x_0^3 +x_0 +1, \qquad \text{and} \qquad x_0 x_1 = y_0^3 + y_0 +1,  
\]
respectively. Then $(y_{-1}, x_0)$ is a solution to $S_{1,2}$ and $(y_0, x_1)$ is a solution to $S_{2,1}$.
\end{theorem}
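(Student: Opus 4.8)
The plan is to verify each claimed membership by checking its two divisibility conditions separately. First I would observe that the hypotheses of $S_{1,1}$ force $y_{-1}$ and $x_1$ to be integers, not merely real numbers: since $y_0 \mid x_0^3 + x_0 + 1$, the quotient $y_{-1} = (x_0^3 + x_0 + 1)/y_0$ is an integer, and since $x_0 \mid y_0^3 + y_0 + 1$, so is $x_1 = (y_0^3 + y_0 + 1)/x_0$. Thus it is meaningful to ask whether these pairs solve the two systems.

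For the claim that $(y_{-1}, x_0)$ solves $S_{1,2}$, the first condition $y_{-1} \mid x_0^3 + x_0 + 1$ is immediate from the defining relation $y_{-1} y_0 = x_0^3 + x_0 + 1$. The substantive condition is the cross relation $x_0 \mid y_{-1}^3 + y_{-1}^2 + 1$, and here I would work modulo $x_0$. The defining relation gives $y_{-1} y_0 = x_0^3 + x_0 + 1 \equiv 1 \pmod{x_0}$, so $y_{-1}$ is a multiplicative inverse of $y_0$ modulo $x_0$ (legitimate since $\gcd(|x_0|, |y_0|) = 1$, as noted after the four systems). To convert the unknown cubic in $y_{-1}$ into the known cubic in $y_0$, I would homogenize by multiplying through by $y_0^3$ and collect using $y_{-1} y_0 \equiv 1$:
\[
y_0^3 (y_{-1}^3 + y_{-1}^2 + 1) = (y_{-1} y_0)^3 + y_0 (y_{-1} y_0)^2 + y_0^3 \equiv 1 + y_0 + y_0^3 \pmod{x_0}.
\]
The right-hand side is exactly $y_0^3 + y_0 + 1 \equiv 0 \pmod{x_0}$ by the second relation of $S_{1,1}$. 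Since $\gcd(y_0^3, x_0) = 1$, the factor $y_0^3$ may be cancelled, yielding $x_0 \mid y_{-1}^3 + y_{-1}^2 + 1$, as required.

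The claim that $(y_0, x_1)$ solves $S_{2,1}$ is then handled by the mirror-image computation with the roles of $x_0$ and $y_0$ interchanged. The condition $x_1 \mid y_0^3 + y_0 + 1$ is immediate from $x_0 x_1 = y_0^3 + y_0 + 1$, and for the cross relation $y_0 \mid x_1^3 + x_1^2 + 1$ I would use $x_0 x_1 \equiv 1 \pmod{y_0}$ together with $y_0 \mid x_0^3 + x_0 + 1$, multiplying by $x_0^3$ to reduce $x_1^3 + x_1^2 + 1$ modulo $y_0$ to $x_0^3 + x_0 + 1 \equiv 0$.

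I do not anticipate a serious obstacle; the proof is essentially a pair of modular reductions. The one point requiring care is that the cross divisibilities (the cubics in $y_{-1}$ and $x_1$) are not visible directly from the defining equations and must be extracted by passing to the multiplicative inverse modulo $x_0$ (respectively $y_0$) and homogenizing by an appropriate cube. Both steps rely essentially on the coprimality $\gcd(|x_0|, |y_0|) = 1$, so I would make sure to invoke that fact explicitly before dividing.
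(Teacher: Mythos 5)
Your proof is correct and follows essentially the same route as the paper: both verify the easy divisibility from the defining relation, then establish the cross relation by noting that $y_{-1}$ (resp.\ $x_1$) is the inverse of $y_0$ (resp.\ $x_0$) modulo $x_0$ (resp.\ $y_0$), multiplying the target cubic by $y_0^3$ (resp.\ $x_0^3$) to reduce it to $y_0^3+y_0+1$ (resp.\ $x_0^3+x_0+1$), and cancelling via $\gcd(|x_0|,|y_0|)=1$. The only cosmetic difference is that you write out the homogenization identity explicitly, which the paper leaves implicit.
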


\begin{proof}
Since $(x_0, y_0)$ is a solution to $S_{1,1}$, we have $\gcd(|x_0|, |y_0|) = 1$ and 
\[ 
\begin{cases}
\; x_0 \, | \, y_0^3 + y_0 +1,\\
\; y_0 \, | \, x_0^3 + x_0 +1.
\end{cases} 
\]
By the definition of $x_1$, we see that $x_1$ is an integer and $x_1 \, | \, y_0^3 +y_0 + 1$. Furthermore, $x_0x_1 \equiv 1 \pmod{y_0}$. Thus,
\[
x_0^3(x_1^3 + x_1^2 +1) \equiv 1 + x_0 + x_0^3 \equiv 0 \pmod{y_0}.
\]
Since $\gcd(|x_0|, |y_0|) = 1$, we have $y_0 \, | \, x_1^3 + x_1^2 +1$. Thus, $(y_0, x_1)$ is a solution to $S_{2,1}$. 

\medskip

Similarly, by the definition of $y_{-1}$, we see that $y_{-1}$ is an integer and $y_{-1} \, | \, x_0^3 +x_0 + 1$. Since $y_{-1}y_0 \equiv 1 \pmod{x_0}$, we have 
\[
y_{0}^3(y_{-1}^3 + y_{-1}^2+ 1) \equiv 1 + y_{0} + y_{0}^3 \equiv 0 \pmod{x_0}.
\]
Since $\gcd(|x_0|, |y_0|) = 1$, we have $x_{0} \, | \, y_{-1}^3 + y_{-1}^2 +1$. Thus $(y_{-1}, x_0)$ is a solution to $S_{1,2}$.
\end{proof}

The following three corollaries can all be proved using parallel arguments.

\begin{corollary}\label{s21}
Let integer pair $(x_0, y_0)$ be a solution to $S_{2,1}$. Let $y_{-1}$ and $x_1$ be real numbers satisfying
\[
y_{-1}y_0 = x_0^3 +x_0 +1, \qquad \text{and} \qquad x_0 x_1 = y_0^3 + y_0 ^2+1, 
\]
respectively. Then $(y_{-1}, x_0)$ is a solution to $S_{1,1}$ and $(y_0, x_1)$ is a solution to $S_{2,2}$.
\end{corollary}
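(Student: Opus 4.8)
The plan is to mirror the proof of Theorem~\ref{s11}, since the only structural change is that the starting pair now solves $S_{2,1}$ rather than $S_{1,1}$. First I would record what the hypothesis supplies: because $(x_0, y_0)$ solves $S_{2,1}$ we have $x_0 \mid y_0^3 + y_0^2 + 1$ and $y_0 \mid x_0^3 + x_0 + 1$, together with $\gcd(|x_0|, |y_0|) = 1$. These two relations immediately force the numbers defined by $x_0 x_1 = y_0^3 + y_0^2 + 1$ and $y_{-1} y_0 = x_0^3 + x_0 + 1$ to be integers, and they hand me for free one half of each target system: $x_1 \mid y_0^3 + y_0^2 + 1$ and $y_{-1} \mid x_0^3 + x_0 + 1$.

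The substance is the remaining divisibility in each case, and both follow from the same ``reciprocal reverses the cubic'' device as in Theorem~\ref{s11}. For $(y_0, x_1)$ I would observe that $x_0 x_1 = y_0^3 + y_0^2 + 1 \equiv 1 \pmod{y_0}$, so $x_1$ is the inverse of $x_0$ modulo $y_0$; multiplying $x_1^3 + x_1^2 + 1$ by the unit $x_0^3$ and reducing gives $x_0^3(x_1^3 + x_1^2 + 1) \equiv x_0^3 + x_0 + 1 \equiv 0 \pmod{y_0}$, the last step being exactly the $S_{2,1}$ relation $y_0 \mid x_0^3 + x_0 + 1$. Cancelling $x_0^3$ (legitimate since $\gcd(|x_0|,|y_0|)=1$) yields $y_0 \mid x_1^3 + x_1^2 + 1$, so $(y_0, x_1)$ solves $S_{2,2}$. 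Symmetrically, for $(y_{-1}, x_0)$ I would use $y_{-1} y_0 \equiv 1 \pmod{x_0}$ and multiply $y_{-1}^3 + y_{-1} + 1$ by $y_0^3$; this time the reduction lands on $y_0^3 + y_0^2 + 1$, which is divisible by $x_0$ by the other $S_{2,1}$ relation, so after cancelling I get $x_0 \mid y_{-1}^3 + y_{-1} + 1$ and $(y_{-1}, x_0)$ solves $S_{1,1}$. The coprimality needed to certify these as genuine solutions is automatic, since any common prime factor of the two entries of either pair would be forced to divide $1$.

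The one point that genuinely needs attention---and where a careless transcription of the $S_{1,1}$ argument would fail---is the bookkeeping of the coefficient pattern. Passing to the reciprocal reverses the coefficient string of the cubic: writing $c \equiv b^{-1}$ gives $c^3 + c^2 + 1 \equiv b^{-3}(b^3 + b + 1)$, so the $x^2$-type polynomial and the $x$-type polynomial are interchanged. Because $S_{2,1}$ is asymmetric---carrying the $y_0^2$ term on the $x_0$ side but only the plain $x_0$ term on the $y_0$ side---I must feed the correct one of the two hypotheses into each computation: the $y_0$-relation (the $x$-type cubic) reverses into the $x^2$-type relation demanded by $S_{2,2}$, while the $x_0$-relation (the $x^2$-type cubic) reverses into the $x$-type relation demanded by $S_{1,1}$. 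Once this matching is pinned down, the two congruences close exactly as above, and I expect no further difficulty beyond this indexing care.
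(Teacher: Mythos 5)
Your proposal is correct and follows exactly the route the paper intends: the paper proves this corollary by the same argument as Theorem~\ref{s11}, multiplying by the cube of the unit and using that inversion modulo the other entry swaps the $x$-type and $x^2$-type cubics, which is precisely your computation. Both congruences ($x_0^3(x_1^3+x_1^2+1)\equiv x_0^3+x_0+1 \pmod{y_0}$ and $y_0^3(y_{-1}^3+y_{-1}+1)\equiv y_0^3+y_0^2+1 \pmod{x_0}$) and your matching of hypotheses to conclusions check out.
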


\begin{corollary}\label{s22}
Let integer pair $(x_0, y_0)$ be a solution to $S_{2, 2}$. Let $y_{-1}$ and $x_1$ be real numbers satisfying
\[
y_{-1}y_0 = x_0^3 +x_0^2 +1, \qquad \text{and} \qquad  x_0 x_1 = y_0^3 + y_0^2 +1, 
\]
respectively. Then $(y_{-1}, x_0)$ is a solution to $S_{2,1}$ and $(y_0, x_1)$ is a solution to $S_{1,2}$.
\end{corollary}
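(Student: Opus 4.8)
The plan is to mirror the proof of Theorem~\ref{s11} essentially line for line, since the only changes are the quadratic terms appearing in $S_{2,2}$ and in the target systems. First I would unpack the hypothesis: because $(x_0,y_0)$ solves $S_{2,2}$ we have $\gcd(|x_0|,|y_0|)=1$ together with $x_0 \mid y_0^3+y_0^2+1$ and $y_0 \mid x_0^3+x_0^2+1$. The first divisibility shows that $x_1$, defined by $x_0x_1=y_0^3+y_0^2+1$, is an integer, and likewise the second shows $y_{-1}$ is an integer. Two of the four required divisibilities are then immediate from the defining equations themselves, namely $x_1 \mid y_0^3+y_0^2+1$ and $y_{-1}\mid x_0^3+x_0^2+1$, which are exactly the ``$x\mid\cdots$'' relations of $S_{1,2}$ (for the pair $(y_0,x_1)$) and of $S_{2,1}$ (for the pair $(y_{-1},x_0)$).

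The substantive content is the remaining two divisibilities, and here I would reuse the modular-inversion device from Theorem~\ref{s11}. Reducing $x_0x_1=y_0^3+y_0^2+1$ modulo $y_0$ gives $x_0x_1\equiv 1\pmod{y_0}$. Multiplying the target expression by $x_0^3$ and replacing each power $x_1^{k}$ by $x_0^{\,3-k}$ via this congruence, I expect to obtain $x_0^3(x_1^3+x_1+1)\equiv 1+x_0^2+x_0^3\pmod{y_0}$, and the right-hand side is $x_0^3+x_0^2+1\equiv 0\pmod{y_0}$ by hypothesis; coprimality of $x_0$ and $y_0$ then cancels the factor $x_0^3$ and yields $y_0\mid x_1^3+x_1+1$, completing $S_{1,2}$ for $(y_0,x_1)$. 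The argument for $(y_{-1},x_0)$ is the symmetric one with the roles of $x_0$ and $y_0$ interchanged, reducing $y_{-1}y_0\equiv 1\pmod{x_0}$ and computing $y_0^3(y_{-1}^3+y_{-1}+1)\equiv 1+y_0^2+y_0^3\equiv 0\pmod{x_0}$, after which $x_0\mid y_{-1}^3+y_{-1}+1$ follows.

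The one point that needs genuine care, rather than mechanical copying, is matching up the correct cubic polynomials, and this is where I expect the only real chance of error. Under $x_0x_1\equiv 1$ the substitution $x_1^k\mapsto x_0^{\,3-k}$ \emph{reverses} the coefficient string of a cubic, so the divisor shape $t^3+t+1$ of $S_{1,2}$ is sent to $t^3+t^2+1$, the divisor shape of $S_{2,2}$, and vice versa. Verifying that this reversal lines the $+x_0^2$ term of the hypothesis up with the $+x_1$ term of the conclusion is precisely what makes $S_{2,2}$ produce $S_{2,1}$ and $S_{1,2}$ rather than some other pair; keeping straight the subscripts and which of the two defining relations is being inverted is the main bookkeeping obstacle. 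Once that correspondence is pinned down, the four divisibilities assemble into the two claimed solutions with no further input.
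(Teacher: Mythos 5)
Your proposal is correct and is exactly the ``parallel argument'' the paper invokes (it mirrors the proof of Theorem~\ref{s11} with $x_0x_1\equiv 1\pmod{y_0}$ and $y_{-1}y_0\equiv 1\pmod{x_0}$, the coefficient-reversal under $x_1^k\mapsto x_0^{3-k}$ correctly turning $t^3+t^2+1$ into $t^3+t+1$). The only blemish is a harmless mislabeling: $x_1\mid y_0^3+y_0^2+1$ is the ``$y\mid\cdots$'' relation of $S_{1,2}$ for the pair $(y_0,x_1)$, not the ``$x\mid\cdots$'' one.
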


\begin{corollary}\label{s12}
Let integer pair $(x_0, y_0)$ be a solution to $S_{1,2}$. Let $y_{-1}$ and $x_1$ be real numbers satisfying
\[
y_{-1}y_0 =  x_0^3 +x_0^2 +1, \qquad \text{and} \qquad x_0 x_1 = y_0^3 + y_0 +1, 
\]
respectively. Then $(y_{-1}, x_0)$ is a solution to $S_{2, 2}$ and $(y_0, x_1)$ is a solution to $S_{1, 1}$.
\end{corollary}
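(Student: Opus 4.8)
The plan is to mirror the computation in the proof of Theorem~\ref{s11}, changing only the two cubic polynomials that appear on each side. Write $P(t) = t^3+t+1$ and $Q(t) = t^3+t^2+1$ for the two polynomials governing the four systems. The single structural fact that makes the whole argument run is that $P$ and $Q$ are reciprocal to one another: $t^3 P(1/t) = Q(t)$ and $t^3 Q(1/t) = P(t)$. Since $(x_0,y_0)$ solves $S_{1,2}$ we have $\gcd(|x_0|,|y_0|)=1$, $x_0 \mid P(y_0)$, and $y_0 \mid Q(x_0)$; in particular both $x_1$ and $y_{-1}$ are integers, with $x_1 \mid P(y_0)$ and $y_{-1} \mid Q(x_0)$ coming for free from their defining equations.

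First I would handle $(y_0, x_1)$. Reducing the defining relation $x_0 x_1 = y_0^3+y_0+1$ modulo $y_0$ gives $x_0 x_1 \equiv 1 \pmod{y_0}$, so $x_1$ is a modular inverse of $x_0$. The missing divisibility for $S_{1,1}$ is $y_0 \mid P(x_1) = x_1^3 + x_1 + 1$. Multiplying $P(x_1)$ by $x_0^3$ and using $x_0 x_1 \equiv 1$ collapses each term, and by reciprocity the result is congruent to $Q(x_0) = x_0^3 + x_0^2 + 1$ modulo $y_0$, which is $\equiv 0$ because $(x_0,y_0)$ solves $S_{1,2}$. Coprimality of $x_0$ and $y_0$ then lets me cancel the factor $x_0^3$ and conclude $y_0 \mid P(x_1)$. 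Together with $x_1 \mid P(y_0)$ this is exactly $S_{1,1}$ for the pair $(y_0, x_1)$.

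The pair $(y_{-1}, x_0)$ is handled symmetrically. From $y_{-1} y_0 = x_0^3 + x_0^2 + 1$ I read off $y_{-1} y_0 \equiv 1 \pmod{x_0}$, so $y_{-1}$ inverts $y_0$ modulo $x_0$. For $S_{2,2}$ I need $x_0 \mid Q(y_{-1}) = y_{-1}^3 + y_{-1}^2 + 1$; multiplying by $y_0^3$ and applying reciprocity turns this into $P(y_0) = y_0^3 + y_0 + 1 \pmod{x_0}$, which vanishes since $x_0 \mid P(y_0)$. Cancelling $y_0^3$ using $\gcd(|x_0|,|y_0|)=1$ gives $x_0 \mid Q(y_{-1})$, and with the automatic $y_{-1} \mid Q(x_0)$ this is precisely $S_{2,2}$ for $(y_{-1}, x_0)$.

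I expect no real obstacle: the argument is the same modular cancellation used in Theorem~\ref{s11}, and the only thing to get right is the bookkeeping of which polynomial ($P$ or $Q$) sits on which side of each system. The one point worth double-checking is that the reciprocity substitution really does send the type-$1$ polynomial to the type-$2$ polynomial and back, so that the two congruence computations land exactly on the divisibility relations supplied by $S_{1,2}$ rather than on something unavailable.
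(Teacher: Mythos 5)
Your proposal is correct and is essentially the paper's own argument: the paper proves these corollaries by the same modular-inverse computation as Theorem~\ref{s11}, namely multiplying the target cubic by $x_0^3$ (resp.\ $y_0^3$), using $x_0x_1 \equiv 1 \pmod{y_0}$ (resp.\ $y_{-1}y_0 \equiv 1 \pmod{x_0}$) to collapse it to the cubic supplied by $S_{1,2}$, and cancelling via $\gcd(|x_0|,|y_0|)=1$. Your explicit observation that $t^3P(1/t)=Q(t)$ and $t^3Q(1/t)=P(t)$ is just a clean packaging of the identity the paper uses implicitly, and all your divisibility bookkeeping checks out.
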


\section{4-chains}

We are now ready to define $4$-chains formally.

\begin{definition}
An infinite sequence $(u_n)_{n \in \mathbb{Z}}$ is called a $4$-chain if and only if
\[
u_{n-1} u_{n+1} = u_{n}^3 + u_{n}^{f(n)} + 1, 
\]
where 
\[
f(n) = \begin{cases}
        1, & \text{ if $n \equiv 0,3 \pmod{4}$}; \\
        2, & \text{ if $n \equiv 1,2 \pmod{4}$}.
			 \end{cases}
\] Two $4$-chains $(u_n)_{n \in \mathbb{Z}}$, $(v_n)_{n \in \mathbb{Z}}$ are considered the same if and only if there exists a $k$ such that either $u_n = v_{k+n}$ for all $n$ or $u_{n} = v_{k-n}$ for all $n$.
\end{definition}

As an example, let's use the pair $(x_0, y_0) = (-1, -1)$, a solution to $S_{2, 2}$, to build a $4$-chain. Using the notation of Corollary~\ref{s22}, we have
\[
y_{-1} = \frac{x_0^3 +x_0^2 +1}{y_0} = -1, \qquad \text{and} \qquad x_1 = \frac{y_0^3 + y_0^2 +1}{x_0} = -1 .
\]
Thus, we get 

\begin{table}[h]
\centering
\begin{tabular}{cccccc}
$y_{-1}$ & $x_0$ & $y_0$ & $x_1$ \\
$-1$ & $-1$ & $-1$ & $-1$ 
\end{tabular}
\end{table}
 Here $(y_{-1}, x_0)$, $(x_0, y_0)$, and $(y_0, x_1)$ are solutions to $S_{2, 1}$, $S_{2, 2}$, and $S_{1, 2}$, respectively.

Next, we apply Corollary~\ref{s21} to $(y_{-1}, x_0)$, and Corollary~\ref{s12} to $(y_0, x_1)$ to find
\[
x_{-1} = \frac{y_{-1}^3 + y_{-1} + 1}{x_0} = 1, \qquad \text{and} \qquad y_1 = \frac{x_1^3 + x_1 + 1}{y_0} = 1 .
\]

Now we have

\begin{table}[H]
\centering
\begin{tabular}{cccccc}
$x_{-1}$ & $y_{-1}$ & $x_0$ & $y_0$ & $x_1$ & $y_1$ \\
1 & $-1$ & $-1$ & $-1$ & $-1$ & 1
\end{tabular}
\end{table}

Repeating the process, we get an infinite chain

\begin{table}[h]
\centering
\begin{tabular}{cccccccccccccc}
$\cdots$ & $y_{-3}$ & $x_{-2}$ & $y_{-2}$ & $x_{-1}$ & $y_{-1}$ & $x_0$ & $y_0$ & $x_1$ & $y_1$ & $x_2$ & $y_2$ & $x_3$  & $\cdots$\\
$\cdots$ & 1541 & $-17$ & $-3$ & 1 & $-1$ & $-1$ & $-1$ & $-1$ &1 & $-3$ & $-17$ & 1541 &$\cdots$
\end{tabular}
\end{table}

In the list above, all pairs $(x_{2i}, y_{2i})$ are solutions to $S_{2, 2}$, all pairs $(y_{2i}, x_{2i+1})$ are solutions to $S_{1, 2}$, and so on. Thus the sequence above is a $4$-chain. Therefore, each of the four systems of simultaneous cubic Diophantine relations has infinitely many integer solutions. 

\medskip

As another example, if we use the pair $(x_0, y_0) = (-1, -1)$, a solution to $S_{1, 1}$, to build a $4$-chain, we get

\begin{table}[H]
\centering
\begin{tabular}{cccccccccccc}
$\cdots$ & $x_{-2}$ & $y_{-2}$ & $x_{-1}$ & $y_{-1}$ & $x_0$ & $y_0$ & $x_1$ & $y_1$ & $x_2$ & $y_2$  & $\cdots$\\
$\cdots$ & 1643 & $-17$ & $-3$ & 1 & $-1$  & $-1$  & 1 & $-3$ & $-17$ & 1643 &$\cdots$
\end{tabular}
\end{table}

In this  $4$-chain, all pairs $(x_{2i}, y_{2i})$ are solutions to $S_{1, 1}$, all pairs $(y_{2i}, x_{2i+1})$ are solutions to $S_{2, 1}$, and so on.

\medskip

We see that two integers $x$ and $y$  satisfy one of the four systems ($S_{1, 1}$, $S_{2, 1}$, $S_{2, 2}$, or $S_{1, 2}$) of cubic Diophantine relations if and only if they are consecutive terms of a $4$-chain. Moreover, any two consecutive terms of a $4$-chain together with the system  they satisfy determine the $4$-chain completely. Therefore, the problem of finding the solutions of those four systems of cubic Diophantine relations is the same as the problem of determining all $4$-chains.

\begin{notation}
We use $\langle u_{i}, u_{i +1}\rangle_{S_{\lambda_a, \lambda_b}}$ to represent the $4$-chain generated by the pair $(u_i, u_{i+1})$ satisfying the system $S_{\lambda_a, \lambda_b}$, where $\lambda_a$ and $\lambda_b$ are constants taking values in $\{1, 2\}$.
\end{notation}

An argument similar to that in the proof of Theorem~\ref{s11} gives us the following.

\begin{corollary}\label{chain}
Let $u$, $v$, $w$, in that order, be three consecutive terms of a $4$-chain. Then the pair $(u, v)$ satisfies the system $S_{\lambda_a, \lambda_b}$ if and only if the pair $(v, w)$ satisfies the system $S_{3-\lambda_b, \lambda_a}$. In other words, 

\[
\langle u, v\rangle_{S_{\lambda_a, \lambda_b}} = \langle v, w\rangle_{S_{3-\lambda_b, \lambda_a}} .
\]
\end{corollary}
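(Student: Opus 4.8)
The plan is to imitate the computation in the proof of Theorem~\ref{s11}, but to carry the two subscripts as formal parameters $\lambda_a, \lambda_b \in \{1,2\}$ rather than treating each of the four systems separately. First I would unpack the notation: for a pair $(x,y)$, the assertion that $(x,y)$ satisfies $S_{\lambda_a,\lambda_b}$ means precisely
\[
x \, | \, y^3 + y^{\lambda_a} + 1 \qquad \text{and} \qquad y \, | \, x^3 + x^{\lambda_b} + 1 .
\]
I would record the two structural facts I will lean on: every such pair has $\gcd(|x|,|y|) = 1$ (already noted in the text), and the map $(\lambda_a,\lambda_b) \mapsto (3-\lambda_b,\lambda_a)$ permutes the four systems cyclically, hence is a bijection of $\{1,2\}^2$ onto itself. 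The latter explains why a single uniform statement subsumes Theorem~\ref{s11} and Corollaries~\ref{s21},~\ref{s22},~\ref{s12}.

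Next I would pin down $w$. Because $u,v,w$ are consecutive terms of a $4$-chain and $(u,v)$ satisfies $S_{\lambda_a,\lambda_b}$, the defining recurrence gives $uw = v^3 + v^{\lambda_a} + 1$, where the exponent on $v$ is exactly the first subscript $\lambda_a$; this matching of the recurrence's $f(n)$ against the system's first subscript can be verified against each of Theorem~\ref{s11} and the three corollaries. From $uw = v^3 + v^{\lambda_a} + 1$ two things are immediate: $w \, | \, v^3 + v^{\lambda_a} + 1$, which is the first relation required of $(v,w)$ under $S_{3-\lambda_b,\lambda_a}$, and $uw \equiv 1 \pmod v$, since $v$ divides $v^3 + v^{\lambda_a}$.

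The heart of the forward implication is the remaining relation $v \, | \, w^3 + w^{3-\lambda_b} + 1$. Mirroring Theorem~\ref{s11}, I would multiply by $u^3$ and use the identity $u^3 w^{3-\lambda_b} = u^{\lambda_b}(uw)^{3-\lambda_b}$ together with $uw \equiv 1 \pmod v$ to get
\[
u^3\bigl(w^3 + w^{3-\lambda_b} + 1\bigr) \equiv (uw)^3 + u^{\lambda_b}(uw)^{3-\lambda_b} + u^3 \equiv u^3 + u^{\lambda_b} + 1 \pmod v .
\]
The right-hand side vanishes modulo $v$ because $(u,v)$ satisfies the second relation of $S_{\lambda_a,\lambda_b}$, namely $v \, | \, u^3 + u^{\lambda_b} + 1$; cancelling $u^3$, which is coprime to $v$, yields $v \, | \, w^3 + w^{3-\lambda_b} + 1$. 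Hence $(v,w)$ satisfies $S_{3-\lambda_b,\lambda_a}$.

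For the converse I would run the symmetric computation: assuming $(v,w)$ satisfies $S_{3-\lambda_b,\lambda_a}$, the relation $uw = v^3 + v^{\lambda_a} + 1$ gives $u \, | \, v^3 + v^{\lambda_a} + 1$ for free, and to obtain $v \, | \, u^3 + u^{\lambda_b} + 1$ I would multiply by $w^3$, use $w^3 u^{\lambda_b} = w^{3-\lambda_b}(uw)^{\lambda_b}$ and $uw \equiv 1 \pmod v$ to reduce to $w^3 + w^{3-\lambda_b} + 1 \equiv 0 \pmod v$, then cancel $w^3$ using $\gcd(|v|,|w|) = 1$. Translating the resulting equivalence into chain notation gives $\langle u,v\rangle_{S_{\lambda_a,\lambda_b}} = \langle v,w\rangle_{S_{3-\lambda_b,\lambda_a}}$. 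I expect the only delicate point to be the bookkeeping: confirming that the recurrence really places the exponent $\lambda_a$ on $v$, and that the exponent-splitting identities $u^3 w^{3-\lambda_b} = u^{\lambda_b}(uw)^{3-\lambda_b}$ and $w^3 u^{\lambda_b} = w^{3-\lambda_b}(uw)^{\lambda_b}$ hold uniformly for both $\lambda_b = 1$ and $\lambda_b = 2$; everything else is the coprimality cancellation already used in Theorem~\ref{s11}.
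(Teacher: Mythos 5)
Your proposal is correct and is exactly the argument the paper intends: the paper gives no separate proof of Corollary~\ref{chain}, saying only that it follows by ``an argument similar to that in the proof of Theorem~\ref{s11},'' and your parametrized computation (multiplying by $u^3$ resp.\ $w^3$, using $uw \equiv 1 \pmod v$ and coprimality to cancel) is precisely that argument carried out uniformly in $\lambda_a,\lambda_b$. The only nit is a labeling slip: $w \mid v^3 + v^{\lambda_a} + 1$ is the \emph{second} relation of $S_{3-\lambda_b,\lambda_a}$ for the pair $(v,w)$, not the first; the mathematics is unaffected.
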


When we examine consecutive pairs of an infinite $4$-chain pair by pair, we see that the systems of cubic Diophantine relations they satisfy are the following:

\[
\cdots \leftrightarrow S_{\lambda_a,\lambda_b} \leftrightarrow S_{3-\lambda_b,\lambda_a} \leftrightarrow S_{3-\lambda_a,3-\lambda_b} \leftrightarrow S_{\lambda_b,3-\lambda_a} \leftrightarrow S_{\lambda_a,\lambda_b} \leftrightarrow \cdots
\]

Indeed, any $4$-chain is a reversible $4$-chain. 

\medskip

Our last definition is that of  matching triples.

\begin{definition}
If $u$, $v$, $w$, in that order, are three consecutive terms of  $\langle u, v\rangle_{S_{\lambda_a,\lambda_b}}$, and $x$, $y$, $z$, in that order, are three consecutive terms of $\langle x, y\rangle_{S_{\lambda_a,\lambda_b}}$, which is not necessarily the same chain as $\langle u, v\rangle_{S_{\lambda_a,\lambda_b}}$, then we call $(u, v, w)$ and $(x, y, z)$ {\it matching triples}. 
\end{definition}

\section{Basic Properties of 4-chains}

It's easy to check that the only $4$-chain that has three or more consecutive terms that are the same is
\[
\ldots, \; 1541, \; -17, \; -3, \; 1, \; -1, \; -1, \; -1, \; -1, \; 1, \; -3, \; -17, \; 1541, \; \ldots
\]

When we re-write a portion of the $4$-chain above modulo 3, we get
\[
1 \quad 2 \quad 2 \quad 2 \quad 2 \quad 1
\]
Simple modular arithmetic calculation tells us that this is the longest sub-chain that does not contain zero (mod 3) that any $4$-chain could possibly have.
Also we notice that for any integer $x$, $x^3$ and $x^2+1$ have opposite parities and $x^3$ and $x+1$ have opposite parities. Thus $x^3 + x + 1$ and $x^3 + x^2 +1$ are never even for any integer $x$. Similarly, we can check and see that for any integer $x$, $x^3 + x + 1$ and $x^3 + x^2 + 1$ are never zero modulo 5 or modulo 7. Thus we have proved the following.

\begin{theorem} In any $4$-chain, there are infinitely many terms that are divisible by $3$; there are no terms that are divisible by $2$, or by $5$, or by $7$.
\end{theorem}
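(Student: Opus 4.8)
The plan is to treat both assertions by reducing the defining recurrence $u_{n-1}u_{n+1} = u_n^3 + u_n^{f(n)} + 1$ modulo the relevant prime. The unifying observation is that the right-hand side is always a value of one of the two polynomials $x^3 + x + 1$ or $x^3 + x^2 + 1$ (evaluated at $x = u_n$), so any residue class that these polynomials never attain is one that the product $u_{n-1}u_{n+1}$ can never attain either, and this constrains the factors.

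For the negative part, I would first record that $x^3 + x + 1 \not\equiv 0$ and $x^3 + x^2 + 1 \not\equiv 0 \pmod{p}$ for $p \in \{2,5,7\}$ and every integer $x$: the case $p = 2$ is the parity computation already noted before the theorem, and the cases $p = 5, 7$ are the finite residue checks also mentioned there. Consequently, for every $n$ the product $u_{n-1}u_{n+1} = u_n^3 + u_n^{f(n)} + 1$ is coprime to $p$, hence so is each factor. Since every index $m$ occurs as $n-1$ for $n = m+1$, the term $u_m$ is a factor of such a product, and therefore no term of the chain is divisible by $2$, $5$, or $7$.

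For the positive part I would work modulo $3$. Using $x^3 \equiv x \pmod 3$, a short table shows that both $x^3 + x + 1$ and $x^3 + x^2 + 1$ vanish mod $3$ exactly when $x \equiv 1 \pmod 3$, while for $x \equiv 2 \pmod 3$ one gets $x^3 + x + 1 \equiv 2$ and $x^3 + x^2 + 1 \equiv 1 \pmod 3$. Now consider a maximal block of consecutive terms none of which is divisible by $3$. For any index $n$ interior to the block, $u_{n-1}$ and $u_{n+1}$ are both nonzero mod $3$, so their product is nonzero; hence $u_n^3 + u_n^{f(n)} + 1 \not\equiv 0 \pmod 3$, which forces $u_n \not\equiv 1$, and therefore $u_n \equiv 2 \pmod 3$. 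Thus every interior term of such a block is $\equiv 2 \pmod 3$.

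The crux is to bound the length of such a block. If three consecutive terms are all $\equiv 2 \pmod 3$, the middle recurrence reads $2 \cdot 2 \equiv u_n^3 + u_n^{f(n)} + 1 \pmod 3$; since the left side is $1$, the table above forces $f(n) = 2$ at that middle index. A block of seven nonzero-mod-$3$ terms $u_m, \dots, u_{m+6}$ contains five consecutive interior terms $u_{m+1}, \dots, u_{m+5}$, all $\equiv 2 \pmod 3$, and applying the previous remark to the three overlapping triples centered at $m+2, m+3, m+4$ forces $f(m+2) = f(m+3) = f(m+4) = 2$. But $f$ is $4$-periodic with pattern $1,2,2,1$ on residues $0,1,2,3 \pmod 4$, so it never equals $2$ at three consecutive integers, a contradiction. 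Hence every such block has length at most $6$, and since the chain is indexed by all of $\mathbb{Z}$, infinitely many terms must be divisible by $3$. I expect the main obstacle to be precisely this block-length bound: one must combine the rigidity of the exponent function $f$ with the two mod-$3$ facts, and check carefully that the interior indices of a length-$7$ block really do furnish three consecutive indices at which $f$ is forced to equal $2$; the remaining steps are routine finite verifications.
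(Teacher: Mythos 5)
Your proof is correct and follows essentially the same route as the paper: the non-divisibility by $2$, $5$, $7$ comes from the fact that $x^3+x+1$ and $x^3+x^2+1$ never vanish modulo those primes (so no term, being a factor of such a value, can be divisible by them), and the divisibility by $3$ comes from bounding the length of a run of terms prime to $3$. Your block-length argument via the $4$-periodicity of $f$ is a careful writing-out of the step the paper dismisses as a ``simple modular arithmetic calculation,'' and your bound of $6$ matches the paper's extremal example $1,2,2,2,2,1$.
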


Using basic algebra, we can prove the following properties of $4$-chains.

\medskip

In a $4$-chain $(u_i)$, if there exist exactly two consecutive terms that are the same, i.e., if $x \neq y$, and $x$, $x$, $y$ are three consecutive terms of $(u_i)$, then we get 
\[
xy = x^3 + x + 1, \; \; \text{ or }  \; \; xy = x^3 + x^2 + 1 .
\]

 The solutions to $xy = x^3 + x + 1$,  $x \neq y$ are $(x, y) = (1, 3)$ and $(x, y) = (-1, 1)$. The solution to $xy = x^3 + x^2 + 1$,  $x \neq y$ is $(x, y) = (1, 3)$.

\medskip

When $(x, y) = (1, 3)$, we get three $4$-chains with exactly two consecutive terms that are the same.
\begin{enumerate}
\item[(1)] $\ldots, 10251, 31, 3, 1, 1, 3, 37, 16897, \ldots$
\item[(2)] $\ldots, 9941, 31, 3, 1, 1, 3, 31, 9941, \ldots$
\item[(3)] $\ldots, 17341, 37, 3, 1, 1, 3, 37, 17341, \ldots$
\end{enumerate}

When $(x, y) = (-1, 1)$, we get another $4$-chain with exactly two  consecutive terms that are the same.
\begin{enumerate}
\item[(4)] $\ldots, 1643, -17, -3, 1, -1, -1, 1, -3, -17, 1643, \ldots$
\end{enumerate}

If in a $4$-chain $(u_i)$, there is an $n$ such that $u_{n-1} = u_{n+1}$, $u_{n-1} \neq u_n$,  i.e., if  $x \neq y$, and $y$, $x$, $y$ are three consecutive terms of $(u_i)$, then we have
\[
y^2 = x^3 + x + 1, \; \; \text{ or }  \; \; y^2 = x^3 + x^2 + 1 .
\]

Those are the equations of elliptic curves. The solutions to $y^2 = x^3 + x + 1$  are $(x, y) = (0, 1)$ and $(x, y) = (72, 611)$. Neither solution generates a $4$-chain. The solutions to $y^2 = x^3 + x^2 + 1$ are $(x, y) = (-1, 1)$, $(x, y) = (0, 1)$, and $(x, y) = (4, 9)$. Among the three, only $(x, y) = (-1, 1)$ generates a $4$-chain. It is
\[
 \ldots, \; 7849, \; -29, \; -3, \; 1, \; -1, \; 1, \; -3, \; -29, \; 8139, \; \ldots
\]

\section{The Least Elements of 4-chains}

Following the convention of Dofs~\cite{Dofs93}, we define the least element of a $4$-chain $(u_i)$ as the single element with the least absolute value. This is well-defined when the least absolute value of an element is not 1. We note that in a $4$-chain $(u_i)$, if $|u_i| = |u_{i+1}|$, for some $i$, then $u_i = 1$ or $u_i = -1$. In such $4$-chains, several elements have absolute value 1. We call such least elements trivial. We are more interested in $4$-chains with non-trivial least elements.

Note that in a $4$-chain with a non-trivial least element, if $1 <|u_i| < |u_{i+1}|$, then from $u_i \cdot u_{i+2} = u_{i+1}^3 + u_{i+1}^{\lambda_a} + 1$ we get
\[
|u_{i+2}| = \frac{|u_{i+1}^3 + u_{i+1}^{\lambda_a} + 1|}{|u_i|} \geq \frac{|u_{i+1}|^3 - |u_{i+1}|^2 -1}{|u_i|} > |u_{i+1}| ,
\]
where the last inequality comes from the fact that $|u_{i+1}| \geq |u_i| + 1$.

\medskip

Similarly, if $|u_i| > |u_{i+1}| > 1$, then  $u_{i-1} \cdot u_{i+1} = u_{i}^3 + u_i^{\lambda_b}+ 1$ gives us $|u_{i-1}| > |u_{i}|$. Thus such $4$-chains have single least elements. We call the least element of a $4$-chain $u_0$. So for $|u_0| \neq 1$, the $4$-chain
\[
\cdots \qquad u_{- 3} \qquad u_{- 2} \qquad u_{-1} \qquad u_0 \qquad  u_1 \qquad u_2 \qquad u_3 \qquad \cdots
\]
has the property 
\[
\cdots > |u_{- 3}| > |u_{- 2}| > |u_{-1}| > |u_0| \; \; \text{ and } \; \; |u_0| <  |u_1| < |u_2| < |u_3| < \cdots
\]

At the end of his paper~\cite{Mohanty77}, Mohanty asks if there are two non-identical 1-chains with the same least element. Dofs~\cite{Dofs93} gives an affirmative answer to this question. Before we answer the same question for $4$-chains, let's first prove the following simple but useful result.

\begin{lemma}\label{common}
If $u_1, t, u_2$, in that order, are three consecutive terms of $\langle u_1, t\rangle_{S_{\lambda_a,\lambda_b}}$ and  $v$ is a factor of $u_1u_2$, then there is a $4$-chain $\langle v, t\rangle_{S_{\lambda_a, \lambda_b}}$ if and only if $t \, | \, v^3 + v^{\lambda_b} + 1$.
\end{lemma}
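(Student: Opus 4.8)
The plan is to unwind both the $4$-chain recurrence and the defining conditions of the system $S_{\lambda_a,\lambda_b}$, and then to observe that one of the two divisibility conditions required of the pair $(v,t)$ is already granted by hypothesis. Recall that a pair $(x,y)$ satisfies $S_{\lambda_a,\lambda_b}$ precisely when $x \mid y^3 + y^{\lambda_a} + 1$ and $y \mid x^3 + x^{\lambda_b} + 1$, and that the existence of the $4$-chain $\langle v,t\rangle_{S_{\lambda_a,\lambda_b}}$ is, by definition together with the uniqueness discussion preceding the Notation, equivalent to $(v,t)$ being a solution of $S_{\lambda_a,\lambda_b}$. So I want to show that $(v,t)$ solves $S_{\lambda_a,\lambda_b}$ if and only if $t \mid v^3 + v^{\lambda_b} + 1$.

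First I would pin down the product $u_1 u_2$. Since $u_1,t,u_2$ are consecutive terms of $\langle u_1,t\rangle_{S_{\lambda_a,\lambda_b}}$, the recurrence centered at $t$ reads $u_1 u_2 = t^3 + t^{e} + 1$ for the appropriate exponent $e$; because $(u_1,t)$ satisfies $S_{\lambda_a,\lambda_b}$, its first relation $u_1 \mid t^3 + t^{\lambda_a} + 1$ forces $e = \lambda_a$, so that $u_1 u_2 = t^3 + t^{\lambda_a} + 1$.

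With this identity in hand, the forward direction is immediate: if $\langle v,t\rangle_{S_{\lambda_a,\lambda_b}}$ exists then $(v,t)$ solves $S_{\lambda_a,\lambda_b}$, so in particular $t \mid v^3 + v^{\lambda_b} + 1$. For the reverse direction, assume $t \mid v^3 + v^{\lambda_b} + 1$; this is exactly the second defining relation of $S_{\lambda_a,\lambda_b}$ for $(v,t)$. The first relation, $v \mid t^3 + t^{\lambda_a} + 1$, holds automatically because $v$ is a factor of $u_1 u_2 = t^3 + t^{\lambda_a} + 1$. I would also verify the standing coprimality requirement $\gcd(|v|,|t|)=1$: any common prime divisor $p$ of $v$ and $t$ would divide $v^3 + v^{\lambda_b} + 1$ through $t$ as well as divide $v$, forcing $p \mid 1$, a contradiction. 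Hence $(v,t)$ is a genuine solution of $S_{\lambda_a,\lambda_b}$ and generates the desired $4$-chain.

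The argument is short and there is no serious obstacle, consistent with the author calling it ``simple but useful.'' The one point demanding care is the bookkeeping of exponents: one must correctly identify the free condition (divisibility into $t^3 + t^{\lambda_a}+1$, with exponent $\lambda_a$ inherited from the recurrence at $t$) versus the assumed condition ($t \mid v^3 + v^{\lambda_b}+1$, with exponent $\lambda_b$). Placing the two indices $\lambda_a$ and $\lambda_b$ in their correct slots, in agreement with Corollary~\ref{chain}, is the only spot where an error could slip in.
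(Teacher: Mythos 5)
Your proposal is correct and follows essentially the same route as the paper: both directions rest on the identity $u_1u_2 = t^3 + t^{\lambda_a}+1$, from which $v \mid t^3+t^{\lambda_a}+1$ is free and the hypothesis $t \mid v^3+v^{\lambda_b}+1$ supplies the remaining relation of $S_{\lambda_a,\lambda_b}$. Your extra check of $\gcd(|v|,|t|)=1$ is harmless; the paper leaves it implicit since it holds for any solution of these systems.
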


\begin{proof} If there is a $4$-chain $\langle v, t\rangle_{S_{\lambda_a, \lambda_b}}$, then clearly $t \, | \, v^3 + v^{\lambda_b} + 1$. Conversely,
note that $u_1u_2 = t^3 + t^{\lambda_a} + 1$. Since $v$ is a factor of $u_1u_2$, we have $v \, | \, t^3 + t^{\lambda_a} + 1$. This together with $t \, | \, v^3 + v^{\lambda_b} + 1$ gives us the $4$-chain $\langle v, t\rangle_{S_{\lambda_a, \lambda_b}}$.
\end{proof}

Note that in Lemma~\ref{common}, $t \, | \, u_1^3 + u_1^{\lambda_b} + 1$ . So $t \, | \, v^3 + v^{\lambda_b} + 1$ is equivalent to 
\[
t \, | \, (v^3 + v^{\lambda_b} + 1) - (u_1^3 + u_1^{\lambda_b} + 1). 
\]

When $\lambda_b = 1$, $(v^3 + v^{\lambda_b} + 1) - (u_1^3 + u_1^{\lambda_b} + 1) = (v-u_1)(v^2 + vu_1 + u_1^2 + 1)$. When $\lambda_b = 2$, $(v^3 + v^{\lambda_b} + 1) - (u_1^3 + u_1^{\lambda_b} + 1) = (v-u_1)(v^2 + vu_1 + u_1^2 + v + u_1)$. 
Thus we have the following corollary. It provides  a way to construct two distinct $4$-chains that share a common element.

\begin{corollary}\label{equiv}
If $u_1, t, u_2$, in that order, are three consecutive terms of $\langle u_1, t\rangle_{S_{\lambda_a,\lambda_b}}$,  $v$ is a factor of $u_1u_2$ and $ t \, | \, (u_1 -v)$, then there is a $4$-chain $\langle v, t\rangle_{S_{\lambda_a, \lambda_b}}$.
\end{corollary}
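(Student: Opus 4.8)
The plan is to derive this directly from Lemma~\ref{common} together with the two algebraic identities displayed immediately above the statement. By Lemma~\ref{common}, since $v$ is a factor of $u_1u_2$, the existence of the $4$-chain $\langle v, t\rangle_{S_{\lambda_a,\lambda_b}}$ is equivalent to the single divisibility condition $t \, | \, v^3 + v^{\lambda_b} + 1$. So the entire task reduces to verifying this one congruence from the hypothesis $t \, | \, (u_1 - v)$.

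First I would recall that because $(u_1, t)$ satisfies $S_{\lambda_a,\lambda_b}$, we already have $t \, | \, u_1^3 + u_1^{\lambda_b} + 1$. Hence $t \, | \, v^3 + v^{\lambda_b} + 1$ holds if and only if $t$ divides the difference $(v^3 + v^{\lambda_b} + 1) - (u_1^3 + u_1^{\lambda_b} + 1)$. Now I would invoke the two factorizations already on the page: when $\lambda_b = 1$ this difference equals $(v - u_1)(v^2 + vu_1 + u_1^2 + 1)$, and when $\lambda_b = 2$ it equals $(v - u_1)(v^2 + vu_1 + u_1^2 + v + u_1)$. In either case $(v - u_1)$ is a factor of the difference, and since $t \, | \, (u_1 - v)$ means $t \, | \, (v - u_1)$, we conclude that $t$ divides the difference, and therefore $t \, | \, v^3 + v^{\lambda_b} + 1$.

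Having verified the congruence, I would close by applying the ``if'' direction of Lemma~\ref{common} to produce the desired $4$-chain $\langle v, t\rangle_{S_{\lambda_a,\lambda_b}}$. There is no genuine obstacle here: the corollary is essentially a repackaging of Lemma~\ref{common} in which the hypothesis $t \, | \, (u_1 - v)$ serves as a convenient sufficient condition for $t \, | \, v^3 + v^{\lambda_b} + 1$. The only point requiring attention is that the argument must cover both values of $\lambda_b$, but since $(v - u_1)$ appears as a common factor in both factorizations, the two cases collapse into one and the proof is uniform.
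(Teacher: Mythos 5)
Your proof is correct and is exactly the paper's argument: the paper derives Corollary~\ref{equiv} from Lemma~\ref{common} by noting that $t \, | \, v^3 + v^{\lambda_b} + 1$ is equivalent to $t$ dividing the difference $(v^3 + v^{\lambda_b} + 1) - (u_1^3 + u_1^{\lambda_b} + 1)$, which has $(v - u_1)$ as a factor in both the $\lambda_b = 1$ and $\lambda_b = 2$ cases. No discrepancies.
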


As an example, the $4$-chain  $\langle -31, -11\rangle_{S_{2, 1}}$ has $-31$, $-11$, 39 as its three consecutive terms. 13 is a factor of $(- 31) \cdot 39$ and $ (-11) \, | \, (-31-13)$. Thus $\langle 13, -11\rangle_{S_{2, 1}}$ is a second $4$-chain with three consecutive terms 13, $-11$, $-93$. That is:

\begin{table}[H]
\centering
\setlength{\tabcolsep}{12pt}
\begin{tabular}{rrrrr}
$\ldots$ & $-31$ & $-11$ & 39 & $\ldots$\\
$\ldots$ & 13 & $-11$ & $-93$ & $\ldots$
\end{tabular}
\end{table}

 Note that $(-31, -11, 39)$ and $(13, -11, -93)$ are matching triples and $-11$ is the least element of both $4$-chains. Thus we have proved the following.

\begin{corollary}
There are non-identical $4$-chains that have the same non-trivial least element.
\end{corollary}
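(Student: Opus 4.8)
The plan is to settle the corollary by exhibiting a single explicit witnessing pair of chains, since this is an existence statement. The engine for the construction is Corollary~\ref{equiv}, which lets me replace one term of a known $4$-chain by a suitable factor of an adjacent product to manufacture a second, distinct $4$-chain sharing the middle neighbor. The whole task is therefore to choose the starting chain and the factor so that the shared neighbor is the least element of \emph{both} resulting chains and is non-trivial.

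First I would start from a $4$-chain whose least element $t$ has $|t| > 1$, and write down three consecutive terms $u_1, t, u_2$ with $t$ in the middle. A convenient choice is $\langle -31, -11\rangle_{S_{2,1}}$, whose consecutive terms are $-31, -11, 39$, so that $t = -11$ sits between two terms of strictly larger absolute value and $u_1 u_2 = (-31)(39) = t^3 + t^2 + 1$. Next I would look for a factor $v$ of the product $u_1 u_2$ with $v \neq u_1$ and $t \mid (u_1 - v)$; the choice $v = 13$ works because $13 \mid (-31)(39)$ and $-11 \mid (-31 - 13)$. Corollary~\ref{equiv} then guarantees a second $4$-chain $\langle 13, -11\rangle_{S_{2,1}}$, whose three consecutive terms compute to $13, -11, -93$.

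It then remains to verify the three required properties. Non-triviality of the common element is immediate, since $|-11| = 11 \neq 1$. That $-11$ is the \emph{least} element of each chain, and not merely a local minimum, follows from the monotonicity of absolute values established earlier in this section: once the absolute values increase with both terms exceeding $1$ they keep increasing, and symmetrically for decreases, so a single valley bottom is the global least element. Since $11 < 31$ and $11 < 39$ in the first chain and $11 < 13 < 93$ in the second, $-11$ is the unique smallest-in-absolute-value term of each. Finally, the two chains are non-identical because the terms flanking $-11$ differ ($-31, 39$ versus $13, -93$), so no index shift or reflection can identify one with the other. None of these steps presents a genuine obstacle; the only point worth stating with care is that the monotonicity lemma is what upgrades the local minimum at $-11$ to the global least element, which is exactly the reason the example proves the corollary rather than merely producing two chains that happen to share a term.
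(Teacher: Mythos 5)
Your proposal is correct and matches the paper's own argument exactly: the paper proves this corollary by the very same example, applying Corollary~\ref{equiv} to the chain $\langle -31, -11\rangle_{S_{2,1}}$ with terms $-31, -11, 39$ and the factor $13$ of $(-31)\cdot 39$ to obtain the second chain $13, -11, -93$. Your additional care in invoking the monotonicity of absolute values to certify that $-11$ is the global least element of both chains, and in checking non-identicality via the differing neighbors, only makes explicit what the paper leaves as an assertion.
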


\section{4-chains with Common Elements}

In this section, we prove the main result of this paper.

\begin{theorem}\label{three} If $u, t, vw$, in that order, are three consecutive terms of the $4$-chain $\langle u, t\rangle_{S_{\lambda_a,\lambda_b}} = $ $\langle t, vw\rangle_{S_{3-\lambda_b,\lambda_a}}$, then $v, t, uw$, in that order, are three consecutive terms of the $4$-chain $\langle v, t\rangle_{S_{\lambda_a,\lambda_b}} = $ $\langle t, uw\rangle_{S_{3-\lambda_b,\lambda_a}}$ if and only if
\begin{align*}
&\; t \, | \, (u-v)(u^2 + uv + v^2 + 1), &\qquad \text{ when } \; \lambda_b = 1,\\
&\; t \, | \, (u-v) \big((uw)^2 + (uw)(vw) + (vw)^2 + 1\big), &\qquad \text{ when } \; \lambda_b = 2.
\end{align*}
\end{theorem}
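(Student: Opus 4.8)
The plan is to reduce the claimed biconditional to an application of Lemma~\ref{common} and then translate the divisibility condition coming from that lemma into the two stated congruences according to the value of $\lambda_b$. First I would note that by hypothesis $u, t, vw$ are consecutive terms of a $4$-chain with $(u,t)$ satisfying $S_{\lambda_a,\lambda_b}$, so in particular $ut = $ (the predecessor times $t$) and $u \cdot (vw) = t^3 + t^{\lambda_a} + 1$; hence $vw$ is a factor of $t^3 + t^{\lambda_a}+1$, and therefore $v$ is a factor of $u_1 u_2$ in the notation of Lemma~\ref{common} with $u_1 = u$ and $u_2 = vw$. By Lemma~\ref{common}, the sequence $\langle v, t\rangle_{S_{\lambda_a,\lambda_b}}$ exists precisely when $t \mid v^3 + v^{\lambda_b} + 1$, and its third term is forced to be $t \cdot (\text{third term})/v = $ the cofactor, which I would identify as $uw$ using $v \cdot (uw) = t^3 + t^{\lambda_a}+1 = u \cdot (vw)$. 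So the content of the theorem is exactly that the existence/matching statement is equivalent to $t \mid v^3 + v^{\lambda_b}+1$, and it remains to rewrite this last condition.

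Next I would invoke the remark following Lemma~\ref{common}: since $(u,t)$ satisfies $S_{\lambda_a,\lambda_b}$ we already have $t \mid u^3 + u^{\lambda_b}+1$, so $t \mid v^3 + v^{\lambda_b}+1$ is equivalent to
\[
t \, | \, (v^3 + v^{\lambda_b} + 1) - (u^3 + u^{\lambda_b} + 1).
\]
For $\lambda_b = 1$ the displayed factorization in the excerpt gives $(v-u)(v^2 + vu + u^2 + 1)$, and since divisibility is insensitive to sign this is the first stated condition $t \mid (u-v)(u^2+uv+v^2+1)$. This branch is essentially immediate.

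The $\lambda_b = 2$ branch is where the real work lies and is the step I expect to be the main obstacle, because the theorem does not state the naive difference $(u-v)(u^2+uv+v^2+u+v)$ but rather the expression in $uw$ and $vw$. The plan here is to show these two divisibility conditions are equivalent modulo the prime-like structure of $t$. Since $u(vw) = v(uw) = t^3 + t^{\lambda_a}+1 \equiv 0 \pmod t$, I would work modulo $t$ and use that $w$ behaves like a common ``inverse-type'' factor: concretely, $uw \equiv$ and $vw \equiv$ are tied to $u,v$ through the relation $u \cdot vw \equiv v \cdot uw \pmod t$. The idea is to multiply the first form by an appropriate power of $w$ (or conversely the second by a power of $u$ or $v$) so that the quadratic factor $u^2 + uv + v^2 + u + v$ transforms into $(uw)^2 + (uw)(vw) + (vw)^2 + 1$; the constant term $+1$ in the $uw,vw$ form should arise precisely because $w^2 \cdot 1$ gets absorbed against the $+1$ that already sits inside $u^3 + u^2 + 1 \equiv 0$. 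I would therefore compute $w^2\big(u^2 + uv + v^2 + u + v\big) \pmod t$ and check, using $u^3 + u^2 + 1 \equiv 0$ and $v^3 + v^2 + 1 \equiv 0$ together with $uw \equiv $ and $vw \equiv$, that it reduces to $(uw)^2 + (uw)(vw) + (vw)^2 + 1$; since $\gcd(w,t)=1$ (as $w$ divides a quantity coprime to $t$), multiplication by $w^2$ does not change divisibility by $t$, so the two conditions coincide. Verifying this identity is the delicate bookkeeping step, and it is the only place where the cubic relations defining the $4$-chain must be used in full.
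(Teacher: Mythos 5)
Your $\lambda_b=1$ branch is correct and is essentially the paper's own argument: both proofs come down to $t\mid u^3+u^{\lambda_b}+1$ holding automatically, so that the existence of the second chain is equivalent to $t\mid(v^3+v+1)-(u^3+u+1)=(v-u)(v^2+vu+u^2+1)$, with Lemma~\ref{common} supplying the remaining divisibility $v\mid u\cdot vw\mid t^3+t^{\lambda_a}+1$. (Minor slip: $u(vw)=t^3+t^{\lambda_a}+1\equiv 1\pmod t$, not $\equiv 0$.)

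The $\lambda_b=2$ branch, however, has a genuine gap, and it sits exactly where you flagged the ``delicate bookkeeping step.'' Your plan requires the congruence $w^2\bigl(u^2+uv+v^2+u+v\bigr)\equiv(uw)^2+(uw)(vw)+(vw)^2+1\pmod t$, which after expanding is equivalent to $w^2(u+v)\equiv 1\pmod t$. This is not a consequence of the standing hypotheses of Theorem~\ref{three}. First, you propose to derive it using $v^3+v^2+1\equiv 0\pmod t$, but in the ``if'' direction that congruence is precisely what is being characterized, so the argument is circular there. Second, even granting both $u^3+u^2+1\equiv 0$ and $v^3+v^2+1\equiv 0\pmod t$, what follows is only $t\mid u^2(v^3+v^2+1)-v^2(u^3+u^2+1)=(u-v)(u+v-u^2v^2)$, i.e.\ $(u-v)\bigl(w^2(u+v)-1\bigr)\equiv 0\pmod t$; cancelling the factor $u-v$ needs $|t|$ prime and $t\nmid(u-v)$, hypotheses that Theorem~\ref{three} deliberately does not assume (indeed $w^2(u+v)\equiv 1\pmod{|t|}$ is exactly the relation the paper derives only later, in the proof of Theorem~\ref{threeTwo}, under those extra hypotheses). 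The paper avoids all of this with a reversibility trick: read the two chains backwards as $\langle vw,t\rangle_{S_{\lambda,1}}$ and $\langle uw,t\rangle_{S_{\lambda,1}}$, apply the already-proved $\lambda_b=1$ case to the pair $(vw,uw)$ to get the condition $t\mid(uw-vw)\bigl((uw)^2+(uw)(vw)+(vw)^2+1\bigr)$, and then use $\gcd(|t|,|w|)=1$ to replace $uw-vw=w(u-v)$ by $u-v$. Your argument could be repaired either by adopting that reduction or by carrying the factor $(u-v)$ through the whole computation rather than trying to prove the unqualified identity $w^2(u+v)\equiv 1$.
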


\begin{proof} We first prove the case for $\lambda_a = \lambda$ and $\lambda_b = 1$. If $\langle u, t\rangle_{S_{\lambda,1}}$ and $\langle v, t\rangle_{S_{\lambda,1}}$ are two $4$-chains, then
\[
\begin{cases}
\; t \, | \, u^3 + u +1, \\
\; t \, | \, v^3 + v +1.
\end{cases} 
\]
Subtracting, we get 
\[
t \, | \, u^3 -v^3 + u -v, \qquad \text{ or } \qquad t \, | \, (u-v)(u^2 + uv + v^2 + 1) .
\]

Conversely, if $t \, | \, (u-v)(u^2 + uv + v^2 + 1)$, then
\[
t \, | \, u^3 -v^3 + u -v, \qquad \text{ or } \qquad t \, | \, (u^3 +u +1)-(v^3 + v+1).
\]
Since $\langle u, t\rangle_{S_{\lambda,1}}$ is a $4$-chain, $t \, | \, u^3 + u +1$. So $t \, | \, v^3 + v +1$. Since it is also true that $v \, | \, uvw = t^3 + t^{\lambda} + 1$, we get a $4$-chain  $\langle v, t\rangle_{S_{\lambda,1}}$.

\medskip

For the case $\lambda_a = \lambda$ and $\lambda_b = 2$, we can re-write the $4$-chains as $\langle vw, t\rangle_{S_{\lambda, 1}} = \langle t, u\rangle_{S_{2,\lambda}}$ and 
$\langle uw, t\rangle_{S_{\lambda, 1}} = \langle t, v\rangle_{S_{2,\lambda}}$, respectively. The rest of the proof mirrors that of the first case and the observation that $\gcd(|t|, |w|) = 1$.
\end{proof}

We are now ready to construct a third $4$-chain when given two $4$-chains $\cdots, \, u, \, t, \, vw, \, \cdots$ and $\cdots, \, v, \, t, \, uw, \, \cdots$, where $(u, t, vw)$ and $(v, t, uw)$ are matching triples.

\begin{theorem}\label{threeOne} If $u, t, vw$, in that order, are three consecutive terms of $\langle u, t\rangle_{S_{\lambda,1}}$ $= \langle t, vw\rangle_{S_{2,\lambda}}$, and $v, t, uw$, in that order, are three consecutive terms of a second $4$-chain  $\langle v, t\rangle_{S_{\lambda,1}}$ $= \langle t, uw\rangle_{S_{2,\lambda}}$,  $|t|$ is a prime, $t \nmid (u-v)$, then there is third $4$-chain, namely,
\[
\langle -w, t\rangle_{S_{\lambda,1}} = \langle t, -uv\rangle_{S_{2,\lambda}}.
\]
\end{theorem}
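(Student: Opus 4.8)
The plan is to reduce everything to a single arithmetic fact in the field $\mathbb{F}_p$, where $p=|t|$ is the prime supplied by the hypothesis. First I would record the data carried by the two given $4$-chains. From $\langle u,t\rangle_{S_{\lambda,1}}$ with consecutive terms $u,t,vw$, the recurrence gives $u\cdot(vw)=t^{3}+t^{\lambda}+1$, so that
\[
uvw = t^{3}+t^{\lambda}+1 \equiv 1 \pmod{t},
\]
and in particular $w\mid t^{3}+t^{\lambda}+1$, hence $-w\mid t^{3}+t^{\lambda}+1$. The systems $S_{\lambda,1}$ satisfied by $(u,t)$ and by $(v,t)$ also give $t\mid u^{3}+u+1$ and $t\mid v^{3}+v+1$. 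A one-line check shows $\gcd(|u|,|t|)=\gcd(|v|,|t|)=1$, and then $uvw\equiv 1\pmod t$ forces $u,v$ to be units modulo $t$.

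My goal is to verify that the pair $(-w,t)$ satisfies $S_{\lambda,1}$; once that is done, Lemma~\ref{common} (applied with the factor $-w$ of $u\cdot(vw)=uvw$) produces the $4$-chain $\langle -w,t\rangle_{S_{\lambda,1}}$, the successor of $t$ in it is $(t^{3}+t^{\lambda}+1)/(-w)=-uv$, and Corollary~\ref{chain} rewrites it as $\langle t,-uv\rangle_{S_{2,\lambda}}$, exactly as claimed. The first divisibility $-w\mid t^{3}+t^{\lambda}+1$ is already in hand, so the whole theorem comes down to the single congruence $t\mid(-w)^{3}+(-w)+1$.

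To establish this I would work in $\mathbb{F}_p$. The elements $u$ and $v$ are roots of $g(X)=X^{3}+X+1$, and the hypothesis $t\nmid(u-v)$ says they are \emph{distinct} roots. Since $\mathbb{F}_p$ is a field, $(X-u)(X-v)$ divides the monic cubic $g$, so $g$ splits as $(X-u)(X-v)(X-r)$ with a third root $r\in\mathbb{F}_p$; comparing constant terms (Vieta) gives $uvr\equiv -1\pmod t$. Combining this with $uvw\equiv 1\pmod t$ and cancelling the unit $uv$ yields $w\equiv -r$, i.e.\ $-w\equiv r\pmod t$. Because $r$ is a root of $g$,
\[
(-w)^{3}+(-w)+1 \equiv r^{3}+r+1 \equiv 0 \pmod{t},
\]
which is precisely the missing congruence, and the construction above then goes through.

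The step I expect to be the main obstacle is the identification $-w\equiv r\pmod t$, and this is exactly where both special hypotheses are spent: primality of $t$ makes $\mathbb{Z}/t\mathbb{Z}$ a field, so that the cubic $g$ has at most three roots and the product-of-roots relation is available, while $t\nmid(u-v)$ guarantees that $u$ and $v$ are two genuinely different roots and hence, together with the factorization of $g$, pin down the third root $r$. If one drops primality, $X^{3}+X+1$ may acquire more than three residues as roots modulo $t$ and the Vieta identity collapses, so no direct analogue of $w\equiv -r$ is available. Everything else—the existence of the chain and the computation of the neighbour $-uv$—is a formal consequence of Lemma~\ref{common} and Corollary~\ref{chain}.
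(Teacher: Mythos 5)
Your proof is correct, but it takes a genuinely different route from the paper's. The paper extracts all four congruences \eqref{a}--\eqref{d} (including the facts that $uw$ and $vw$ are roots of $X^3+X^2+1$ modulo $p$), manipulates them to obtain $(u-v)(u+v-w)\equiv 0 \pmod p$ and hence $w\equiv u+v$, and then feeds this into the divisibility criterion of Theorem~\ref{three}, which is why it must first dispose of the cases $t\mid(u+w)$ and $t\mid(v+w)$ via Corollary~\ref{equiv} before it can cancel the factor $u-(-w)$. You instead use only that $u$ and $v$ are distinct roots of $X^3+X+1$ over $\mathbb{F}_p$ together with the single product relation $uvw\equiv 1\pmod t$: the field structure forces the cubic to split, Vieta gives $uvr\equiv -1$ for the third root $r$, and cancelling the unit $uv$ identifies $-w$ with $r$. (This is consistent with the paper, since the vanishing $X^2$-coefficient gives $r=-(u+v)$, i.e.\ $w\equiv u+v$.) Your route buys several things: it never uses congruences \eqref{b} and \eqref{d}, it needs only Lemma~\ref{common} rather than the full criterion of Theorem~\ref{three}, and it requires no preliminary case split. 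What the paper's computation buys in exchange is that it is elementary polynomial manipulation with no appeal to unique factorization over $\mathbb{F}_p$, and its symmetric form transfers almost verbatim to the companion statement Theorem~\ref{threeTwo}, where the roles of $u,v$ and $uw,vw$ are interchanged.
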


\begin{proof} We may assume that $t \nmid \big(u - (-w)\big)$ and $t \nmid \big(v-(-w)\big)$ because otherwise the theorem is true by Corollary~\ref{equiv}. 
Let $p = |t|$. From $\langle u, t\rangle_{S_{\lambda,1}} = \langle t, vw\rangle_{S_{2,\lambda}}$, we get
\begin{align}
u^3 + u + 1 &\equiv 0 \pmod{p}, \label{a}\\
(vw)^3 + (vw)^2 + 1 &\equiv 0 \pmod{p}. \label{b}
\end{align}
From $\langle v, t\rangle_{S_{\lambda,1}} = \langle t, uw\rangle_{S_{2,\lambda}}$, we get
\begin{align}
v^3 + v + 1 &\equiv 0 \pmod{p}, \label{c}\\
(uw)^3 + (uw)^2 + 1 &\equiv 0 \pmod{p}. \label{d}
\end{align}
Equation~\eqref{a} minus Equation~\eqref{c} gives us 
\begin{equation}
u^3 - v^3 + u -v \equiv 0 \pmod{p}. \label{e}
\end{equation}
Equation~\eqref{d} minus Equation~\eqref{b} gives us $(uw)^3 - (vw)^3 + (uw)^2 -(vw)^2 \equiv 0 \pmod{p}$, or 
\begin{align}
w(u^3-v^3) + u^2 -v^2 \equiv 0 \pmod{p}. \label{f}
\end{align}
Equation~\eqref{f} minus (Equation~\eqref{e} multiplied by $w$) gives us
\begin{align*}
u^2 - v^2 -w(u-v) &\equiv 0 \pmod{p}, \text{ or}\\
(u-v)(u+v-w) &\equiv 0 \pmod{p}.
\end{align*}
Since $t \nmid u-v$, we have $w = u + v \pmod{p}$.

\medskip

By the first part of Theorem~\ref{three}, our theorem is true if 
\[
t \, | \, \big(u-(-w)\big)\big(u^2 + u(-w) + (-w)^2 +1\big) .
\]
Since $|t| = p$ is a prime number and $t \nmid \big(u - (-w)\big)$, we only need to show that 
\begin{equation}
p \, | \, u^2 + u(-w) + (-w)^2 +1 \label{A}
\end{equation}
Since $w = u + v \pmod{p}$, we have 
\begin{align*}
u^2 + u(-w) + (-w)^2 +1 &\equiv u^2 + u\big(-(u+v)\big) + (u+v)^2 + 1 \pmod{p}\\
	&\equiv u^2 - u^2 - uv + u^2 + 2uv + v^2 + 1 \pmod{p} \\
	&\equiv u^2 + uv + v^2 + 1 \pmod{p}.
	\end{align*}
Note that $t \nmid (u-v)$. By Theorem~\ref{three}, $u^2 + uv + v^2 + 1 \equiv 0 \pmod{p}$. Thus \eqref{A} is true and we are done.
\end{proof}

\begin{theorem}\label{threeTwo} If $uw$, $t$, $v$, in that order, are three consecutive terms of  $\langle uw, t\rangle_{S_{\lambda,1}} = \langle t, v\rangle_{S_{2,\lambda}}$, and $vw$, $t$, $u$, in that order, are three consecutive terms of a second $4$-chain $\langle vw, t\rangle_{S_{\lambda,1}} = \langle t, u\rangle_{S_{2,\lambda}}$,  $|t|$ is a prime, $t \nmid (u-v)$, then there is third $4$-chain, namely,
\[
\langle -uv, t\rangle_{S_{\lambda,1}} = \langle t, -w\rangle_{S_{2,\lambda}}.
\]
\end{theorem}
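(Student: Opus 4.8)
The plan is to reduce Theorem~\ref{threeTwo} to Theorem~\ref{threeOne} rather than re-run the entire congruence computation, since the two statements are structurally the same relation read in the two possible directions along a $4$-chain. First I would observe that by Corollary~\ref{chain}, rewriting a $4$-chain $\langle a, t\rangle_{S_{\lambda,1}}$ as $\langle t, a'\rangle_{S_{2,\lambda}}$ is just the bookkeeping of reading the same three consecutive terms $a, t, a'$ in reverse, so the hypotheses of Theorem~\ref{threeTwo} say precisely that $uw, t, v$ and $vw, t, u$ are matching triples of two $4$-chains, and the desired conclusion is that $-uv, t, -w$ form a matching triple of a third.

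The cleanest route is to apply Theorem~\ref{threeOne} with its roles of the factors permuted. In Theorem~\ref{threeOne} the input is a pair of chains through $u, t, vw$ and $v, t, uw$, and the output is the chain through $-w, t, -uv$. In Theorem~\ref{threeTwo} the two given chains pass through $uw, t, v$ and $vw, t, u$. So I would set up the substitution that carries one problem to the other: informally, relabel the three ``factors'' $\{u, v, w\}$ of the middle term's product so that the two input triples of Theorem~\ref{threeOne} become the two input triples here. Concretely, the product of the two outer terms in each row equals $t^3 + t^{\lambda} + 1$ in both theorems (since $(uw)(v)\cdot \text{stuff}$ and $u\cdot(vw)$ both come from the same factorization), and the symmetric function being built, $-uv$, is obtained from Theorem~\ref{threeOne}'s output $-uv$ by the matching identification. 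I would state the exact index/sign dictionary and then invoke Theorem~\ref{threeOne} verbatim.

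If a direct appeal proves awkward because the factor playing the role of $w$ changes, the safe fallback is to repeat the proof of Theorem~\ref{threeOne} line by line with $p = |t|$. Writing the four congruences coming from the two chains—two from the $S_{\lambda,1}$ side and two from the $S_{2,\lambda}$ side—subtracting the matching pairs to eliminate the constant and linear-in-$t$ data, and then taking the appropriate linear combination, one again lands on a factored congruence of the form $(u-v)(\,\text{linear in } u, v, w\,) \equiv 0 \pmod p$. Using $t \nmid (u-v)$ and the primality of $p$ to cancel $u - v$ yields the key relation among $u, v, w$ modulo $p$; substituting it into the product criterion from Theorem~\ref{three} (with the $\lambda_b = 2$ branch, since the outgoing chain here is on the $S_{2,\lambda}$ side for the term $-w$) verifies that $\langle -uv, t\rangle_{S_{\lambda,1}} = \langle t, -w\rangle_{S_{2,\lambda}}$ is genuinely a $4$-chain.

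The main obstacle will be getting the symmetry dictionary exactly right: the three terms $u, v, w$ enter asymmetrically (the chain direction and the choice of which of $S_{\lambda,1}$, $S_{2,\lambda}$ governs each end are not interchangeable), so a careless swap can produce a false statement. I expect the crux to be verifying that after the substitution the quantity whose divisibility by $p$ must be checked is again $u^2 + uv + v^2 + 1$ (or its $\lambda_b = 2$ analogue), matching the hypothesis $t \nmid (u-v)$ supplied through Theorem~\ref{three}; once that congruence is identified, the conclusion follows as in Theorem~\ref{threeOne}. As there, one may assume $t \nmid \big(uw - (-uv)\big)$ and $t \nmid \big(vw - (-uv)\big)$, since otherwise Corollary~\ref{equiv} already gives the third chain.
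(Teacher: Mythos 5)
Your primary route---reducing Theorem~\ref{threeTwo} to Theorem~\ref{threeOne} by permuting the roles of $u$, $v$, $w$---does not work, and the paper does not attempt it. To match the input triples $(uw, t, v)$ and $(vw, t, u)$ to Theorem~\ref{threeOne}'s triples $(U, t, VW)$ and $(V, t, UW)$ you are forced into $U = uw$, $V = vw$, and then $UVW = t^3 + t^{\lambda} + 1 = uvw$ forces $W = 1/w$, which is not an integer. More fundamentally, the two theorems encode genuinely different congruence systems: in Theorem~\ref{threeOne} the single letters $u, v$ are roots of $X^3 + X + 1$ modulo $p$ and the products $uw, vw$ are roots of $X^3 + X^2 + 1$, while in Theorem~\ref{threeTwo} it is exactly the other way around. (Reversal of the chain does not repair this either: reversing $\langle u,t\rangle_{S_{\lambda,1}}$ gives a pair satisfying $S_{1,\lambda}$ at the $t$-end, not $S_{2,\lambda}$ as required here. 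Your reading of Corollary~\ref{chain} as ``reversal bookkeeping'' is also off; it describes a forward shift along the chain.) So there is no ``index/sign dictionary,'' and the theorem must be proved by its own computation.

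Your fallback is the correct strategy and is what the paper does, but as written it is only a plan, and its sketch mispredicts the one step that is genuinely new. Eliminating between $(uw)^3 - (vw)^3 + uw - vw \equiv 0$ and $u^3 - v^3 + u^2 - v^2 \equiv 0 \pmod{p}$ (multiply the second by $w^3$, subtract, cancel $w$ and then $u-v$) yields $w^2(u+v) \equiv 1 \pmod{p}$ --- not a congruence of the form $w \equiv u+v$ or ``$(u-v)(\text{linear in }u,v,w)\equiv 0$'' as you anticipate. To convert this into the usable relation $uw + vw \equiv uv \pmod{p}$ one must additionally invoke $uvw = t^3 + t^{\lambda} + 1 \equiv 1 \pmod{p}$ and multiply through by $uv$. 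That appeal to $uvw \equiv 1$ has no counterpart in the proof of Theorem~\ref{threeOne} and is the essential new ingredient here; only after it does the substitution into the $\lambda_b = 2$ criterion of Theorem~\ref{three} reduce the target $(uw)^2 + (uw)(-uv) + (-uv)^2 + 1$ to $(uw)^2 + (uw)(vw) + (vw)^2 + 1 \equiv 0 \pmod{p}$. Your opening reduction to Corollary~\ref{equiv} and the final appeal to Theorem~\ref{three} are right, but the proposal as it stands is missing the computation that constitutes the proof.
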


\begin{proof}
As in the proof of the previous theorem, we may assume that $t \nmid \big(uw - (-uv)\big)$ and $t \nmid \big(vw - (-uv)\big)$ because otherwise the theorem is true by Corollary~\ref{equiv}. Let $|t| = p$.  Similar to the proof of the previous theorem, from
\[
\langle uw, t\rangle_{S_{\lambda,1}} = \langle t, v\rangle_{S_{2,\lambda}} \quad \text{and} \quad \langle vw, t\rangle_{S_{\lambda,1}} = \langle t, u\rangle_{S_{2,\lambda}},
\]
we get
\begin{align*}
(uw)^3 - (vw)^3 + uw - vw &\equiv 0 \pmod{p}, \\
u^3 - v^3 + u^2 - v^2 &\equiv 0 \pmod{p}.
\end{align*}
Note that $p$ is prime, $\gcd(p, |w|) = 1$, and $p \nmid (u-v)$. In the two equations above, we use the second equation multiplied by $w^3$ minus the first equation to get
\begin{align*}
w^3(u^2 -v^2) - (uw-vw)&\equiv 0 \pmod{p}, \\
w^2(u^2 -v^2) - (u-v) &\equiv 0 \pmod{p}, \\
w^2(u+v) -1 &\equiv 0 \pmod{p}.
\end{align*}
Since $uvw = t^3 + t^\lambda + 1$, $uvw \equiv 1 \pmod{p}$. So the last equation gives us
\begin{align*}
w^2(u+v)  &\equiv 1 \pmod{p}, \\
uv\big(w^2(u+v)\big)  &\equiv uv \pmod{p}, \\
uw + vw &\equiv uv \pmod{p}.
\end{align*}
Since $p \nmid \big(uw - (-uv)\big)$, by the second part of Theorem~\ref{three}, our theorem is true if 
\begin{equation}
p \, | \, (uw)^2 + (uw)(-uv) + (-uv)^2 +1. \label{B}
\end{equation}
Since $uw + vw \equiv uv \pmod{p}$, we have 
\begin{align*}
&(uw)^2 + (uw)(-uv) + (-uv)^2 +1 \\
&\equiv (uw)^2 + (uw)(-uw-vw) + (uw+vw)^2 + 1 \pmod{p} \\
&\equiv (uw)^2 + (uw)(vw) + (vw)^2 + 1 \pmod{p}.
\end{align*}
By Theorem~\ref{three}, $(uw)^2 + (uw)(vw) + (vw)^2 + 1 \equiv 0 \pmod{p}$. Thus \eqref{B} is true and we are done.
\end{proof}

Combining Theorem~\ref{threeOne} and Theorem~\ref{threeTwo}, we get the following theorem.

\begin{theorem}[Main result]
If $u$, $t$, $vw$, in that order, are three consecutive terms of $\langle u, t\rangle_{S_{\lambda_a,\lambda_b}}$ $= \langle t, vw\rangle_{S_{3-\lambda_b,\lambda_a}}$, and $v$, $t$, $uw$, in that order, are three consecutive terms of a second $4$-chain $\langle v, t\rangle_{S_{\lambda_a, \lambda_b}} = \langle t, uw\rangle_{S_{3-\lambda_b,\lambda_a}}$,  $|t|$ is a prime, $t \nmid (u-v)$, then there is third $4$-chain, namely,
\[
\langle -w, t\rangle_{S_{\lambda_a,\lambda_b}} = \langle t, -uv\rangle_{S_{3-\lambda_b,\lambda_a}}.
\]
\end{theorem}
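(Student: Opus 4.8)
The plan is to reduce the general statement to the two special cases already proved, splitting on the value of $\lambda_b \in \{1, 2\}$. First I would treat the case $\lambda_b = 1$. Here the hypotheses are word-for-word those of Theorem~\ref{threeOne} with $\lambda = \lambda_a$: the two given chains are $\langle u, t\rangle_{S_{\lambda_a, 1}}$ and $\langle v, t\rangle_{S_{\lambda_a, 1}}$, with consecutive triples $u, t, vw$ and $v, t, uw$, while $|t|$ is prime and $t \nmid (u - v)$. Theorem~\ref{threeOne} then produces the third chain $\langle -w, t\rangle_{S_{\lambda_a, 1}} = \langle t, -uv\rangle_{S_{2, \lambda_a}}$, which is exactly the claimed conclusion once we note that $3 - \lambda_b = 2$.

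For the case $\lambda_b = 2$, I would read both chains backwards and appeal to Theorem~\ref{threeTwo}. Using the observation that every $4$-chain is reversible, reversing $\langle u, t\rangle_{S_{\lambda_a, 2}} = \langle t, vw\rangle_{S_{1, \lambda_a}}$ turns the triple $u, t, vw$ into $vw, t, u$. Since reversing a consecutive pair swaps its two system indices --- a pair $(x, y)$ satisfying $S_{p, q}$ has reverse $(y, x)$ satisfying $S_{q, p}$ straight from the definition of the systems --- the pair $(vw, t)$ now satisfies $S_{\lambda_a, 1}$, so the reversed chain is $\langle vw, t\rangle_{S_{\lambda_a, 1}} = \langle t, u\rangle_{S_{2, \lambda_a}}$. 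Reversing the second chain likewise turns $v, t, uw$ into $uw, t, v$, giving $\langle uw, t\rangle_{S_{\lambda_a, 1}} = \langle t, v\rangle_{S_{2, \lambda_a}}$. These are precisely the two hypotheses of Theorem~\ref{threeTwo} with $\lambda = \lambda_a$, and the conditions that $|t|$ be prime and $t \nmid (u - v)$ are unaffected by reversal. Theorem~\ref{threeTwo} then yields $\langle -uv, t\rangle_{S_{\lambda_a, 1}} = \langle t, -w\rangle_{S_{2, \lambda_a}}$, whose reverse is $\langle -w, t\rangle_{S_{\lambda_a, 2}} = \langle t, -uv\rangle_{S_{1, \lambda_a}}$, matching the asserted conclusion since here $3 - \lambda_b = 1$.

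The main obstacle I anticipate is not any computation --- all of the modular work lives inside Theorem~\ref{threeOne} and Theorem~\ref{threeTwo} --- but the bookkeeping of the system labels under reversal. I must be sure that reading a $4$-chain backwards genuinely is a $4$-chain and that the reversed pair $(vw, t)$ lands in $S_{\lambda_a, 1}$ rather than in one of the other three systems; this is exactly where the reversibility of $4$-chains and the index-cycling rule of Corollary~\ref{chain} do the work. Once the labels are tracked correctly, the two shapes ``$w$ on the last term'' (Theorem~\ref{threeOne}) and ``$w$ on the first term'' (Theorem~\ref{threeTwo}) fuse into the single statement covering all four systems $S_{1,1}, S_{2,1}, S_{2,2}, S_{1,2}$, and the three triples $(u, t, vw)$, $(v, t, uw)$, $(-w, t, -uv)$ share the leading system $S_{\lambda_a, \lambda_b}$, so they are matching triples by definition.
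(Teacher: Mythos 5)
Your proposal is correct and matches the paper's own (very terse) proof, which simply combines Theorem~\ref{threeOne} and Theorem~\ref{threeTwo}; your case split on $\lambda_b$ and the reversal argument for $\lambda_b=2$ is exactly the intended combination, and the label bookkeeping under reversal is handled correctly.
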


\medskip

For instance, the three $4$-chains below share the common element 31. Moreover, $(-17, \, 31, \, -3^3 \cdot 67)$, $(3,\;  31, \; 3^2\cdot 17\cdot 67)$ and $(3^2\cdot 67, \; 31, \; 3\cdot 17)$ are three matching triples.
\begin{table}[H]
\centering
\setlength{\tabcolsep}{18pt}
\begin{tabular}{rrrrr}
$\ldots$ & $-17$ & 31 & $-3^3 \cdot 67$ & $\ldots$\\
$\ldots$ & 3 & 31 & $3^2\cdot 17\cdot 67$ & $\ldots$\\
$\ldots$ & $3^2\cdot 67$ & 31 & $3\cdot 17$ & $\ldots$
\end{tabular}
\end{table}

\section*{Acknowledgments}

I am extremely grateful to Professor Izzet Coskun for his immense knowledge and continued guidance. Professor Keith Conrad's comments on an earlier version of the manuscript greatly improved the content and presentation of the paper, although any errors are my own.


\begin{thebibliography}{4}

\bibitem {Dofs93} E. Dofs, On extensions of 1 chains, {\it Acta Arith}. {\bf 65} (1993), 249--258.

\bibitem {Mills53} W. H. Mills, A system of quadratic Diophantine equations, {\it Pacific J. Math}. {\bf 3(1)} (1953), 209--220.

\bibitem {Mohanty77} S. P. Mohanty, A system of cubic Diophantine equations, {\it J. Number Theory} {\bf 9} (1977), 153--159.

\bibitem {Mordell69} L. J. Mordell, {\it Diophantine Equations}, Volume 30 of {\it Pure and Applied Mathematics}, Academic Press, London and New York, 1969.

\end{thebibliography}
\end{document}